\numberwithin{equation}{section} \theoremstyle{plain}
\theoremstyle{plain}
\newtheorem{Thm}[subsection]{Theorem}
\newtheorem{Cor}[subsection]{Corollary}
\newtheorem{Lem}[subsection]{Lemma}
\newtheorem{Prop}[subsection]{Proposition}
\theoremstyle{definition}
\newtheorem{Def}[subsection]{Definition}
\theoremstyle{remark}
\newtheorem{rem}[subsection]{Remark}
\newtheorem{exs}{Examples}
\newenvironment{thm}%
          { \begin{Thm}  }%
          { \end{Thm} }
\newenvironment{lem}%
          { \begin{Lem}    }%
          { \end{Lem} }
\newcommand{\ecomp}{C_{c}(E_{s})}
\newcommand{\vcomp}{C_{c}(V)}
\newcommand{\lw}{{\ell_{w}^2}(V)}
\newcommand{\la}{{\ell_{a}^2}{(E_{s})}}
\newcommand{\delswa}{\Delta_{\sigma}}
\newcommand{\hmax}{H_{\max}}
\newcommand{\hmin}{H_{\min}}
\newcommand{\Dom}{\operatorname{Dom}}
\title{Essential self-adjointness of magnetic Schr\"odinger operators on locally finite graphs}
\author{Ognjen Milatovic}
\address{Department of Mathematics and Statistics\\
           University of North Florida   \\
           Jacksonville, FL 32224 \\
           USA
            }
\email{omilatov@unf.edu} \subjclass[2000]{05C63, 05C50, 35J10, 47B25}
\begin{document}
\maketitle
\begin{abstract} We give sufficient conditions for essential self-adjointness of magnetic Schr\"odinger operators on locally finite graphs. Two of the main theorems of the present paper generalize recent results of Torki-Hamza.
\end{abstract}
\section{Introduction and the Main Results}\label{S:main}
\subsection{The setting}\label{SS:setting}
Let $G=(V,E)$ be an infinite graph without loops and multiple edges between vertices. By $V=V(G)$ and $E=E(G)$ we denote the set of vertices  and the set of unoriented edges of $G$ respectively. In what follows, the notation $m(x)$ indicates the degree of a vertex $x$, that is, the number of edges that meet at $x$. We assume that $G$ is locally finite, that is, $m(x)$ is finite for all $x\in V$.

In what follows, $x\sim y$ indicates that there is an edge that connects $x$ and $y$. We will also need a set of oriented edges
\begin{equation}\label{E: edge-or}
E_{0}:=\{[x,y],[y,x]: x,\,y\in V\textrm{ and } x\sim y\}.
\end{equation}
The notation $e=[x,y]$ indicates an oriented edge $e$ with starting vertex $o(e)=x$ and terminal vertex $t(e)=y$. The definition~(\ref{E: edge-or}) means that every unoriented edge in $E$ is represented by two oriented edges in $E_0$. Thus, there is a two-to-one map $p\colon E_0\to E$. For $e=[x,y]\in E_{0}$, we denote the corresponding reverse edge by $\widehat{e}=[y,x]$. This gives rise to an involution $e\mapsto \widehat{e}$ on $E_{0}$.

To help us write formulas in unambiguous way, we fix an orientation on each edge by specifying a subset $E_{s}$ of $E_0$ such that $E_0=E_{s}\cup \widehat{E_{s}}$ (disjoint union), where $\widehat{E_{s}}$ denotes the image of $E_{s}$ under the involution $e\mapsto \widehat{e}$. Thus, we may identify $E_{s}$ with $E$ by the map $p$.

In the sequel, we assume that $G$ is connected, that is, for any $x,\,y\in V$ there exists a path $\gamma$ joining $x$ and $y$. Here, $\gamma$ is a sequence $x_1,\,x_2,\,\dots,x_n\in V$ such that $x=x_1$, $y=x_n$, and $x_{j}\sim x_{j+1}$ for all $1\leq j\leq n-1$. The length of a path $\gamma$ is defined as the number of edges in $\gamma$.

The distance $d(x,y)$ between vertices $x$ and $y$ of $G$ is defined as the number of edges in the shortest path connecting the vertices $x$ and $y$. Fix a vertex $x_0\in V$ and define $r(x):=d(x_{0}, x)$. The $n$-neighborhood $B_{n}(x_0)$ of $x_0\in V$ is defined as
\begin{equation}\label{E: neighb-n}
\{x\in V\colon r(x)\leq n\}\cup\{e=[x,y]\in E_{s}\colon r(x)\leq n\textrm{ and }r(y)\leq n\}.
\end{equation}

In what follows, $C(V)$ is the set of complex-valued functions on $V$, and $C(E_{s})$ is the set of functions $Y\colon E_{0}\to\mathbb{C}$ such that $Y(e)=-Y(\widehat{e})$. The notations $\vcomp$ and $\ecomp$ denote the sets of finitely supported elements of $C(V)$ and $C(E_{s})$ respectively.

In the sequel, we assume that $V$ is equipped with a weight $w\colon V\to\mathbb{R}^{+}$. By $\lw$ we denote the space of functions $f\in C(V)$ such that $\|f\|<\infty$, where $\|f\|$ is the norm corresponding to the inner product
\begin{equation}\label{E:inner-w}
(f,g):=\sum_{x\in V}w(x)f(x)\overline{g(x)}.
\end{equation}
Additionally, we assume that $E$ is equipped with a weight $a\colon E_{0}\to\mathbb{R}^{+}$ such that $a(e)=a(\widehat{e})$ for all $e\in E_{0}$. This makes $G=(G, w, a)$ a weighted graph weights $w$ and $a$.

\subsection{Magnetic Schr\"odinger Operator}\label{SS:magnetic-schro}
Let $U(1):=\{z\in\mathbb{C}\colon |z|=1\}$ and $\sigma\colon E_{0}\to U(1)$ with  $\sigma(\widehat{e})=\overline{\sigma(e)}$ for all $e\in  E_{0}$, where $\overline{z}$ denotes the complex conjugate of $z\in \mathbb{C}$.

We define the magnetic Laplacian $\delswa\colon C(V)\to C(V)$ on the graph $(G, w, a)$ by the formula
\begin{equation}\label{E:magnetic-lap}
    (\delswa u)(x)=\frac{1}{w(x)}\sum_{e\in\mathcal{O}_{x}}a(e)(u(x)-\sigma(\widehat{e})u(t(e))),
\end{equation}
where $x\in V$ and
\begin{equation}\label{E:neighborhood-o-x}
\mathcal{O}_{x}:=\{e\in E_0\colon o(e)=x\}.
\end{equation}

For the case $a\equiv 1$ and $w\equiv 1$, the definition~(\ref{E:magnetic-lap}) is the same as in Dodziuk--Mathai~\cite{Dodziuk-Mathai-03}. For the case $\sigma\equiv 1$, see Sy--Sunada~\cite{Sunada-92} and Torki--Hamza~\cite{Torki-10}.

Let $q\colon V\to \mathbb{R}$, and consider a Schr\"odinger-type expression
\begin{equation}\label{E:magnetic-schro}
Hu:= \delswa u +qu
\end{equation}

We give sufficient conditions for $H|_{\vcomp}$ to be essentially self-adjoint in the space $\lw$. We first state the main results, and in Section~\ref{SS:problem-bacground}  we make a few remarks concerning the existing work on the essential self-adjointness problem on locally finite graphs.

\begin{thm}\label{T:main-0} Assume that $(G, w, a)$ is an infinite, locally finite, connected, oriented, weighted graph with $w(x)\equiv w_0$, where $w_0>0$ is a constant.  Additionally, assume that there exists a constant $C\in\mathbb{R}$ such that $q(x)\geq -C$ for all $x\in V$. Then, the operator $H|_{\vcomp}$ is essentially self-adjoint in $\lw$.
\end{thm}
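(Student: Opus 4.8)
The plan is to exploit that $H$ is bounded below on $\vcomp$ and to reduce essential self-adjointness to a triviality statement about the deficiency space. First I would record the basic quadratic-form identity: for $u,\phi\in\vcomp$, interchanging the sum over vertices with the sum over oriented edges and pairing each $e\in E_{s}$ with $\widehat e$ gives
\begin{equation*}
(\delswa u,\phi)=\sum_{e\in E_{s}}a(e)\,(d_{\sigma}u)(e)\,\overline{(d_{\sigma}\phi)(e)},\qquad (d_{\sigma}u)(e):=u(o(e))-\sigma(\widehat e)u(t(e)).
\end{equation*}
This pairing is Hermitian symmetric, so $\delswa$, and hence $H=\delswa+q$ with $q$ real, is symmetric on $\vcomp$; taking $\phi=u$ and using $|\sigma|=1$ shows $(\delswa u,u)=\sum_{e\in E_{s}}a(e)|(d_{\sigma}u)(e)|^{2}\ge 0$. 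Combined with $q\ge -C$ this yields $H\ge -C$ on $\vcomp$.

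Since $H$ is semibounded, its deficiency indices coincide and equal $\dim\ker(H^{*}+\lambda)$ for any fixed $\lambda>C$; thus it suffices to prove $\ker(H^{*}+\lambda)=\{0\}$. So I would take $v\in\Dom(H^{*})$ with $(H^{*}+\lambda)v=0$. Because $G$ is locally finite, testing against $\phi\in\vcomp$ in $(H^{*}v,\phi)=(v,H\phi)$ and moving the (finitely supported) operator $H\phi$ back onto $v$ shows that $H^{*}$ restricts to the formal expression, so $v$ satisfies the pointwise equation $(\delswa v)(x)+(q(x)+\lambda)v(x)=0$ for every $x\in V$. The goal is to force $v\equiv 0$.

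Next I would run a discrete Green's formula on the finite balls $B_{n}(x_{0})$. Summing $w_{0}(\delswa v)(x)\overline{v(x)}$ over $\{r(x)\le n\}$, splitting the resulting edge sum into edges internal to $B_{n}$ and boundary edges leaving $B_{n}$, and inserting the pointwise equation gives
\begin{equation*}
\sum_{\substack{e\in E_{s}\\\text{internal}}}a(e)\,|(d_{\sigma}v)(e)|^{2}+\sum_{r(x)\le n}w_{0}\,(q(x)+\lambda)|v(x)|^{2}=-\operatorname{Re}\mathcal{B}_{n},
\end{equation*}
where $\mathcal{B}_{n}=\sum a(e)(d_{\sigma}v)(e)\overline{v(o(e))}$ runs over edges with $o(e)\in B_{n}$ and $t(e)\notin B_{n}$. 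The left side is at least $(\lambda-C)\sum_{r(x)\le n}w_{0}|v(x)|^{2}$, so $(\lambda-C)\|\mathbf{1}_{B_{n}}v\|^{2}\le|\mathcal{B}_{n}|$; letting $n\to\infty$ and showing $\mathcal{B}_{n}\to 0$ (along a subsequence) would give $\|v\|=0$.

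The main obstacle is precisely the control of the boundary flux $\mathcal{B}_{n}$, since no upper bound on the edge weight $a$ is assumed and $\delswa$ is genuinely unbounded: the crude estimate $|\mathcal{B}_{n}|\lesssim\sum_{\partial B_{n}}a(e)(|v(o(e))|+|v(t(e))|)\,|v(o(e))|$ does not obviously vanish. The delicate point, and where I expect the real work to lie, is to choose cutoffs adapted to $a$ (for instance replacing $\mathbf{1}_{B_{n}}$ by a function decaying across a whole annulus, or passing to a subsequence on which the shell contributions are small) and to feed the pointwise equation back in, so that the flux across $B_{n}$ is reabsorbed into the already finite $\lw$-mass of $v$. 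This is the discrete analogue of a limit-point-at-infinity argument, and it is exactly the step where the hypothesis $w\equiv w_{0}$ does its work.
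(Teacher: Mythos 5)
Your proposal is not a complete proof, and the step you leave open is not a technical detail but the entire content of the theorem. Everything up to the Green's identity on $B_n$ is correct (the summation by parts, the identification of $H^{*}$ with the formal expression, and the reduction to $\ker(H^{*}+\lambda)=\{0\}$ all go through), but the control of the boundary flux $\mathcal{B}_n$ --- which you explicitly defer to a future choice of cutoffs ``adapted to $a$'' --- cannot be carried out by that route in the setting of Theorem~\ref{T:main-0}. The theorem makes \emph{no} assumption on the edge weights $a$: no growth bound and no metric completeness. Cutoff and flux arguments of exactly the kind you sketch are what this paper uses for its other two theorems, and there they consume precisely the hypotheses you lack: Theorem~\ref{T:main-1} needs Assumption (A), i.e. $m_n a_n/n^{2}\to 0$, to make the cutoff error term $\frac{m_{2n}a_{2n}}{n^{2}}\|u\|^{2}$ vanish, and Theorem~\ref{T:main-2} needs completeness of $(G,d_{w,a})$ together with bounded degree so that Lipschitz cutoffs have gradients controlled by $\sqrt{w(x)/a(e)}$. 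With $a$ arbitrary, a Cauchy--Schwarz estimate on $\mathcal{B}_n$ produces factors like $\sum_{\partial B_n}a(e)|v|^{2}$ that are not dominated by the $\lw$-norm of $v$, and passing to subsequences does not help because no summable quantity controls the shells. So the ``real work'' you postpone has no way to get started from the data you are using; in particular, your argument uses the pointwise bound $q\geq -C$ only to discard the potential term in the truncated identity, which is no more leverage than the form bound of Theorems~\ref{T:main-1} and~\ref{T:main-2} provides, and those theorems demonstrably require geometric input.

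The paper's proof exploits the pointwise hypothesis in an essentially different, flux-free way. After normalizing $w_0=1$ and shifting so that $q\geq 1$, take $u\in\ker(\hmax)$, so $\delswa u=-qu$ pointwise, and apply the discrete Kato inequality $|u|\cdot\Delta|u|\leq \textrm{Re}\,(\delswa u\cdot\overline{u})$ of Lemma~\ref{L:kato-discrete}. This converts the magnetic equation into the scalar pointwise inequality $|u|\,(\Delta|u|+|u|)\leq 0$, so that \eqref{E:new-working} holds at every vertex where $|u|>0$. Inspection of \eqref{E:new-working} then shows that any vertex $x_0$ with $|u(x_0)|>0$ has a neighbor $x_1$ with $|u(x_1)|>|u(x_0)|$ (otherwise every term in the sum is nonnegative and the inequality fails); iterating gives a strictly increasing sequence $|u(x_0)|<|u(x_1)|<\cdots$ at necessarily distinct vertices, contradicting $u\in\lw$ precisely because $w\equiv w_0>0$ --- this, not a flux estimate, is where the constancy of $w$ does its work. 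The argument is purely pointwise and combinatorial: no cutoffs, no exhaustion, no control on $a$. If you wish to keep your Green's-formula framework, you would still need Kato's inequality (or some substitute transferring the pointwise positivity of $q+\lambda$ onto $|u|$) to close the argument; the semiboundedness alone will not.
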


In the next theorem, we will need the following additional assumption on the graph $G$.

\bigskip

\noindent{\textbf{Assumption (A)} Assume that
\begin{equation}\label{E:bounded-assumption}
    \lim_{n\to\infty}\frac{m_{n}a_{n}}{n^2}=0,
\end{equation}
where
\begin{equation}\label{E:bounded-assumption-2}
m_{n}:=\max_{x\in B_n(x_0)}(m(x))\qquad\textrm{and}\qquad a_{n}:=\max_{x\in B_n(x_0)}\left(\max_{e\sim x}\left(\frac{a(e)}{w(x)}\right)\right),
\end{equation}
where $B_n(x_0)$ as in~(\ref{E: neighb-n}), and $e\sim x$, with $e\in E_{s}$ and $x\in V$, indicates that $t(e)=x$ or $o(e)=x$.

\begin{thm}\label{T:main-1} Assume that $(G, w, a)$ is an infinite, locally finite, connected, oriented, weighted graph. Assume that the Assumption (A) is satisfied. Additionally, assume that there exists a constant $C\in\mathbb{R}$ such that
\begin{equation}\label{E:assumption-semibounded}
    (Hu,u)\geq -C\|u\|^2\qquad\textrm{ for all }u\in\vcomp,
\end{equation}
where $(\cdot,\cdot)$ and $\|\cdot\|$ are as in~(\ref{E:inner-w}). Then, the operator $H|_{\vcomp}$ is essentially self-adjoint in $\lw$.
\end{thm}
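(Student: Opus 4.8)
The plan is to reduce essential self-adjointness to a triviality statement for a deficiency space and then run a magnetic analogue of the IMS localization argument, using Assumption (A) precisely to annihilate the resulting boundary error.

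First I would record that $H|_{\vcomp}$ is symmetric (since $\delswa$ has a nonnegative Hermitian Dirichlet form and $q$ is real-valued) and, by \eqref{E:assumption-semibounded}, bounded below by $-C$. For a semibounded symmetric operator there is a standard criterion: essential self-adjointness is equivalent to $\ker(H^{*}-\lambda)=\{0\}$ for a single real $\lambda$ strictly below the lower bound. I would fix $\lambda:=-C-1$ and set $\delta:=-C-\lambda=1>0$. Given $v\in\Dom(H^{*})$ with $H^{*}v=\lambda v$, the discrete Green's formula (legitimate because every test function is finitely supported and $\delswa$ is local) converts the weak identity $(Hu,v)=\lambda(u,v)$, $u\in\vcomp$, into the pointwise eigenequation $\delswa v+qv=\lambda v$ with $v\in\lw$; the goal becomes $v\equiv 0$.

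Next I would set up the energy bookkeeping. Writing $(\nabla_{\sigma}u)(e):=u(t(e))-\sigma(e)u(o(e))$ for $e\in E_{s}$, a pairing of \eqref{E:magnetic-lap} gives $(\delswa u,u)=\sum_{e\in E_{s}}a(e)|(\nabla_{\sigma}u)(e)|^{2}$, and by polarization $(\delswa u,\phi)=\sum_{e\in E_{s}}a(e)(\nabla_{\sigma}u)(e)\overline{(\nabla_{\sigma}\phi)(e)}$ for $\phi\in\vcomp$. For a real, finitely supported cutoff $\chi$ I would pair the eigenequation against $\chi^{2}v\in\vcomp$, take real parts, and apply the magnetic product rule $(\nabla_{\sigma}(\chi v))(e)=\chi(t(e))(\nabla_{\sigma}v)(e)+\sigma(e)v(o(e))\big(\chi(t(e))-\chi(o(e))\big)$ to reach the localization identity
\begin{equation*}
((H-\lambda)(\chi v),\chi v)=\sum_{e\in E_{s}}a(e)\,(\nabla\chi)(e)^{2}\,\operatorname{Re}\!\big(v(t(e))\,\overline{\sigma(e)v(o(e))}\big),
\end{equation*}
where $(\nabla\chi)(e):=\chi(t(e))-\chi(o(e))$. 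Applying \eqref{E:assumption-semibounded} to $\chi v\in\vcomp$ bounds the left-hand side below by $\delta\|\chi v\|^{2}$, so $\delta\|\chi v\|^{2}$ is dominated by the right-hand side.

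Finally I would activate Assumption (A) through the cutoffs. Let $\chi_{n}$ depend only on $r(x)$, equal to $1$ on $B_{n}(x_{0})$, equal to $0$ outside $B_{2n}(x_{0})$, and linear in $r$ between, so $|(\nabla\chi_{n})(e)|\le 1/n$ for every edge and $\nabla\chi_{n}$ is supported on vertices of $B_{2n}(x_{0})$. Since $|\sigma(e)|=1$, AM--GM yields $|\operatorname{Re}(v(t(e))\overline{\sigma(e)v(o(e))})|\le\tfrac12(|v(o(e))|^{2}+|v(t(e))|^{2})$; regrouping the edge sum over vertices and invoking \eqref{E:bounded-assumption-2} gives
\begin{equation*}
\delta\|\chi_{n}v\|^{2}\le\frac12\sum_{x}w(x)|v(x)|^{2}\,m(x)\max_{e\sim x}\frac{a(e)}{w(x)}(\nabla\chi_{n})(e)^{2}\le\frac12\,\frac{m_{2n}a_{2n}}{n^{2}}\,\|v\|^{2}.
\end{equation*}
By \eqref{E:bounded-assumption}, $m_{2n}a_{2n}/n^{2}=4\,m_{2n}a_{2n}/(2n)^{2}\to 0$, while $\|\chi_{n}v\|\to\|v\|$ by dominated convergence; hence $\delta\|v\|^{2}\le 0$, forcing $v\equiv 0$. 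The step I expect to be the main obstacle is the magnetic cross term in the localization identity: unlike the non-magnetic situation it is not sign-definite and the product rule leaves no clean nonnegative $|(\nabla\chi)v|^{2}$ remainder. The resolution is the diamagnetic-type estimate from $|\sigma(e)|=1$, after which the only remaining issue is combinatorial, namely matching the per-vertex edge count $m(x)$ and the weight ratio $a(e)/w(x)$ against the $n^{-2}$ decay of the cutoff gradient---which is exactly the scaling encoded in \eqref{E:bounded-assumption}.
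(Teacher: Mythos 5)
Your proposal is correct and is essentially the paper's own argument: the same semibounded-operator criterion (triviality of the kernel of the adjoint at one real point below the lower bound---you shift the spectral parameter where the paper shifts the operator by $(C+1)I$), the same Masamune-type cutoffs with gradient bounded by $1/n$, the same edge-to-vertex regrouping against Assumption (A), and the same key localization identity, since $\operatorname{Re}\big(v(t(e))\,\overline{\sigma(e)v(o(e))}\big)$ equals $\sigma_1(\widehat{e})[v_1(t(e))v_1(o(e))+v_2(t(e))v_2(o(e))]+\sigma_2(\widehat{e})[-v_1(o(e))v_2(t(e))+v_1(t(e))v_2(o(e))]$, i.e.\ exactly the integrand of the paper's Proposition~\ref{P:leibniz} written in complex rather than real/imaginary notation. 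The only cosmetic difference is that you derive that identity from the quadratic form and a magnetic product rule, while the paper proves a Leibniz rule for $\delswa$ and pairs the contributions of $e$ and $\widehat{e}$; the resulting estimate is the same (yours even carries a harmless extra factor $\tfrac12$).
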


In the next theorem, we will need the notion of weighted distance on $G$. Let $a\colon E_{0}\to\mathbb{R}^{+}$ be as in~(\ref{E:magnetic-lap}). Following Colin de Verdi\`ere, Torki-Hamza, and Truc~\cite{vtt-10}, we define the weighted distance $d_{w,a}$ on $G$ as follows:
\begin{equation}\label{E:w-a-dist}
d_{w,a}(x,y):=\inf_{\gamma\in P_{x,y}}L_{w,a}(\gamma),
\end{equation}
where $P_{x,y}$ is the set of all paths $\gamma\colon x=x_1,\,x_2,\,\dots,x_n=y$ such that $x_{j}\sim x_{j+1}$ for all $1\leq j\leq n-1$, and the length $L_{w,a}(\gamma)$ is computed as follows:
\begin{equation}\label{E:length-a-dist}
L_{w,a}(\gamma)=\sum_{j=1}^{n-1}\sqrt{\frac{\min\{w(x_j), w(x_{j+1})\}}{a([x_{j},x_{j+1}])}}.
\end{equation}
We say that the metric space $(G,d_{w,a})$ is complete if every Cauchy sequence of vertices has a limit in $V$.

In what follows, we say that $G$ is a graph of bounded degree if there exists a constant $N>0$ such that $m(x)\leq N$ for all $x\in V$.

\begin{thm}\label{T:main-2} Assume that $(G, w, a)$ is an infinite, locally finite, connected, oriented, weighted graph. Assume that $G$ is a graph of bounded degree. Assume that $(G,d_{w,a})$ is a complete metric space. Additionally, assume that $H$ satisfies~(\ref{E:assumption-semibounded}). Then, the operator $H|_{\vcomp}$ is essentially self-adjoint in $\lw$.
\end{thm}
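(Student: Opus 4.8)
The plan is to invoke the standard self-adjointness criterion for a lower-semibounded symmetric operator. Since $\sigma(\widehat e)=\overline{\sigma(e)}$ and $a(e)=a(\widehat e)$, the Laplacian $\delswa$ is symmetric on $\vcomp$, and as $q$ is real the operator $H|_{\vcomp}$ is symmetric; by \eqref{E:assumption-semibounded} it is bounded below by $-C$. For such an operator it suffices to show that $\ker(H^{*}-\lambda)=\{0\}$ for a single real $\lambda<-C$, and I fix $\lambda=-C-1$. Because $G$ is locally finite, $\delswa$ is a local operator, so any $v\in\Dom(H^{*})$ satisfies $H^{*}v=Hv$ with $Hv$ evaluated pointwise through \eqref{E:magnetic-schro}; hence $H^{*}v=\lambda v$ is equivalent to $v\in\lw$ together with the pointwise identity $(\delswa v)(x)+q(x)v(x)=\lambda v(x)$ for every $x\in V$. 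The whole argument thus reduces to proving that such a $v$ is zero.

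The main computational tool is the first-order Green formula. Writing $(d_{\sigma}f)(e):=f(o(e))-\sigma(\widehat e)f(t(e))$, one has, for $v\in\lw$ and $\phi\in\vcomp$,
\[
(\delswa v,\phi)=\sum_{e\in E_{s}}a(e)\,(d_{\sigma}v)(e)\,\overline{(d_{\sigma}\phi)(e)},
\]
where all sums are finite by local finiteness. Specializing to $\phi=\chi^{2}v$ with $\chi$ real and finitely supported and taking real parts yields the magnetic localization identity
\[
\operatorname{Re}\,(\delswa v,\chi^{2}v)=\sum_{e\in E_{s}}a(e)\,|(d_{\sigma}(\chi v))(e)|^{2}-R(\chi),
\]
in which, because $\chi$ is real and enters only through its ordinary differences across edges, the remainder obeys
\[
R(\chi)\le\tfrac12\sum_{e\in E_{s}}a(e)\,\big(\chi(o(e))-\chi(t(e))\big)^{2}\big(|v(o(e))|^{2}+|v(t(e))|^{2}\big).
\]

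Next I build cutoffs adapted to the weighted distance. With $x_{0}$ the fixed base point, set $\rho(x):=d_{w,a}(x_{0},x)$ and $\chi_{n}(x):=\phi(\rho(x)/n)$, where $\phi\colon[0,\infty)\to[0,1]$ is $1$-Lipschitz, $\phi\equiv1$ on $[0,1]$ and $\phi\equiv0$ on $[2,\infty)$. Two facts are decisive. First, completeness of $(G,d_{w,a})$ forces every metric ball to be finite (the discrete Hopf--Rinow theorem of~\cite{vtt-10}); hence $\operatorname{supp}\chi_{n}$ is finite and $\chi_{n}v\in\vcomp$, so that \eqref{E:assumption-semibounded} may be applied to it. Second, since $\rho$ is $1$-Lipschitz for $d_{w,a}$ and a single edge $e=[x,y]$ has length $\sqrt{\min\{w(x),w(y)\}/a(e)}$ by \eqref{E:length-a-dist},
\[
a(e)\big(\chi_{n}(x)-\chi_{n}(y)\big)^{2}\le\frac{\min\{w(x),w(y)\}}{n^{2}}.
\]

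Finally I combine these. Pairing the pointwise equation with $\chi_{n}^{2}v$, taking real parts, and using the localization identity together with $(q\chi_{n}v,\chi_{n}v)\in\mathbb{R}$ give $\operatorname{Re}(Hv,\chi_{n}^{2}v)=(H(\chi_{n}v),\chi_{n}v)-R(\chi_{n})$, while $Hv=\lambda v$ gives $\operatorname{Re}(Hv,\chi_{n}^{2}v)=\lambda\|\chi_{n}v\|^{2}$. Applying \eqref{E:assumption-semibounded} to $\chi_{n}v$ and rearranging,
\[
(-\lambda-C)\,\|\chi_{n}v\|^{2}\le R(\chi_{n})\le\frac{1}{2n^{2}}\sum_{e\in E_{s}}\big(w(x)|v(x)|^{2}+w(y)|v(y)|^{2}\big)\le\frac{N}{2n^{2}}\,\|v\|^{2},
\]
where the last step uses the bounded-degree hypothesis $m(x)\le N$ to count each vertex at most $N$ times. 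Since $-\lambda-C=1>0$ and $\|v\|<\infty$, the right-hand side tends to $0$, so $\|\chi_{n}v\|\to0$; as $\chi_{n}(x)\to1$ for each fixed $x$, Fatou's lemma forces $v=0$. The steps demanding the most care are the verification that the phases $\sigma$ do not obstruct the localization identity and, above all, the role of the two geometric hypotheses: it is completeness—through finiteness of metric balls—that makes $\chi_{n}$ compactly supported (the crux of the argument), while bounded degree is exactly what drives the remainder $R(\chi_{n})$ to zero.
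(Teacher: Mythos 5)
Your proof is correct and follows essentially the same route as the paper: reduce essential self-adjointness to triviality of $\ker(H^{*}-\lambda)$ below the semibound, build finitely supported Lipschitz cutoffs from $d_{w,a}$ (finiteness of metric balls via completeness and the Hopf--Rinow argument), and kill the localization remainder using the one-edge bound $d_{w,a}(o(e),t(e))\le\sqrt{\min\{w(o(e)),w(t(e))\}/a(e)}$ together with bounded degree. The only differences are cosmetic: you obtain the localization identity from the discrete Green formula, whereas the paper gets the identical remainder (compare your $R(\chi)$ with the right-hand side of \refp{leibniz}) from a pointwise Leibniz rule plus real/imaginary bookkeeping, and you use $1/n$-Lipschitz rescaled cutoffs $\chi_{n}$ in place of the paper's unit-Lipschitz annulus cutoffs $\psi_{R}$.
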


\begin{rem} Let $d_{w,a}$ be as in~(\ref{E:w-a-dist}). It is easily seen that if $G$ is a graph of bounded degree and if~(\ref{E:bounded-assumption}) is satisfied, then $(G,d_{w,a})$ is complete.

The following example describes a graph $G$ of bounded degree such that $(G,d_{w,a})$ is complete and~(\ref{E:bounded-assumption}) is not satisfied.

\begin{exs} (i) Denote $\mathbb{Z}_{+}:=\{1,2,3,\dots\}$, and consider the graph $G_1=(V,E)$ with $V=\mathbb{Z}_{+}\cup\{0\}$ and $E=\{[n-1,n]\colon n\in \mathbb{Z}_{+}\}$.  Define $a([n-1,n])=n$ and $w(n-1)=\frac{1}{n}$, for all $n\in \mathbb{Z}_{+}$.

Since $w(x)$ is not constant, we cannot use Theorem~\ref{T:main-0} in this example.

Let $K\in \mathbb{Z}_{+}$ and let $m_K$ and $a_K$ be as in~(\ref{E:bounded-assumption-2}) with $n=K$ and $x_0=0$. We have $m_K=2$ and
$\displaystyle a_{K}=(K+1)^2$. Thus,
\[
\lim_{K\to\infty}\frac{m_{K}a_{K}}{K^2}=2,
\]
and~(\ref{E:bounded-assumption}) is not satisfied. Thus, in this example, we cannot use Theorem~\ref{T:main-1}.

Fix $K_0\in \mathbb{Z}_{+}\cup\{0\}$, and let $K>K_0$. For $x_0=K_0$ and $x=K$, by~(\ref{E:w-a-dist}) we have
\[
d_{w,a}(x_0,x)=\sum_{n=K_0}^{K-1}\frac{1}{\sqrt{(n+1)(n+2)}}\to\infty,\quad\textrm{as }K\to\infty.
\]
Thus, the metric $d_{w,a}$ is complete. Additionally, the graph $G_1$ has bounded degree. By Theorem~\ref{T:main-2} the operator $\Delta_{\sigma}|_{\vcomp}$ is essentially self-adjoint in $\lw$.

\medskip

The following example describes a graph of unbounded degree such that~(\ref{E:bounded-assumption}) is satisfied.

\medskip

\noindent (ii) Consider $G_2=(V,E)$, where $V=\{x_0,x_1,x_2,\dots\}$. The vertices are arranged in a ``triangular" pattern so that $x_0$ is in the first row, $x_1$ and $x_2$ are in the second row, $x_3$, $x_4$, and $x_5$ are in the third row, and so on. The vertex $x_0$ is connected to $x_1$ and $x_2$. The vertex $x_i$, where $i=1,2$, is connected to every vertex $x_j$, where $j=3,4,5$. The pattern continues so that each of $k$ vertices in the $k$-th row is connected to each of $k+1$ vertices in the $(k+1)$-th row. Define $a(e)\equiv 1$ for all $e\in E$. For every vertex $x$ in the $n$-th row, define $w(x)=n^{-1/2}$.

Since $w(x)$ is not constant, we cannot use Theorem~\ref{T:main-0}. Since $G_2$ does not have a bounded degree, we cannot use Theorem~\ref{T:main-2}.

Let $K\in\{1,2,\dots\}$. Let $m_K$ and $a_K$ be as in~(\ref{E:bounded-assumption-2}) with $n=K$ and $x_0$ as in this example. We have $m_K=2K+2$ and
$\displaystyle a_{K}=\sqrt{K+1}$. Thus,
\[
\lim_{K\to\infty}\frac{m_{K}a_{K}}{K^2}=0,
\]
and~(\ref{E:bounded-assumption}) is satisfied. By Theorem~\ref{T:main-1} the operator $\Delta_{\sigma}|_{\vcomp}$ is essentially self-adjoint in $\lw$.
\end{exs}
\end{rem}

\begin{rem} In the context of a not necessarily complete graph of bounded degree, a sufficient condition for essential-self adjointness of $\Delta_{\sigma}|_{\vcomp}$ in $\lw$ is given by Colin de Verdi\`ere, Torki-Hamza, and Truc~\cite[Theorem 3.1]{vtt-10-preprint}. In the case $q\equiv 0$, Theorem~\ref{T:main-2} is contained in~\cite[Theorem 3.1]{vtt-10-preprint}.
%In the remainder of this remark, $D(x)$ is the distance (induced by~(\ref{E:w-a-dist})) between a vertex $x$ and the boundary $V_{\infty}:=\widehat{V}\setminus V$, where $\widehat{V}$ is the completion of $V$. With these notations, the main result of~\cite{vtt-10-preprint} looks as follows: If $G$ is a graph with maximal degree $N$ and if there exists a constant $M$ such that

%\begin{equation}\label{E:vtt-con}
%W(x)\geq\frac{N}{2(D(x))^2}-M,
%\end{equation}
%where $W$ is the ``effective potential" expressed in terms of the magnetic field associated with the magnetic potential $\sigma$ (see~\cite[Theorem 2.1]{vtt-10-preprint}), then $\Delta_{\sigma}|_{\vcomp}$ is essentially self-adjoint in $\lw$.

%In the case of a metrically complete graph $(G,d_{w,a})$, the condition~(\ref{E:vtt-con}) reduces to $W(x)\geq -M$, and this is always true by the definition of $W$; see equation (1) in~\cite[Theorem 2.1]{vtt-10-preprint}. Thus, in the case $q\equiv 0$, Theorem~\ref{T:main-2} is contained in~\cite[Theorem 3.1]{vtt-10-preprint}.
\end{rem}

\section{Background of the Problem}\label{SS:problem-bacground}
In the context of a locally finite graph $G=(V,E)$, recently there has been a lot of interest in the operator
\begin{equation}\label{E:ord-lap}
    (\Delta u)(x)=\frac{1}{w(x)}\sum_{e\in\mathcal{O}_{x}}a(e)(u(x)-u(t(e))),
\end{equation}
where $x\in V$ and $\mathcal{O}_{x}$ is as in~(\ref{E:neighborhood-o-x}).

In many spectral-theoretic investigations of $\Delta$ and $\Delta+q$, where $q\colon V\to \mathbb{R}$ is a real-valued function, it is helpful to have a self-adjoint operator. Thus, finding sufficient conditions for essential self-adjointness of $\Delta$ and $\Delta+q$ is an important problem in analysis on locally finite graphs.  Note that $\Delta$ in~(\ref{E:ord-lap}), also known as physical Laplacian, is generally an unbounded operator in $\lw$. Putting $w\equiv 1$ and $a\equiv 1$ in~(\ref{E:ord-lap}) and dividing by the degree function $m(x)$, we get the normalized Laplacian, which is a bounded operator on $\lw$, with inner product as in~(\ref{E:inner-w}) with  $w(x)=m(x)$. The normalized Laplacian has been studied extensively; see, for instance, Chung~\cite{Chung} and Mohar--Woess~\cite{Mohar-89}.

In the discussion that follows, the local finiteness assumption is understood, unless specified otherwise. The essential self-adjointness of~$\Delta|_{\vcomp}$,  where $\Delta$ is as in~(\ref{E:ord-lap}) with $w\equiv 1$ and $a\equiv 1$, was proven by Wojciechowski~\cite{Woj-08} and Weber~\cite{Weber-10}.  For $\Delta$ is as in~(\ref{E:ord-lap}) with $w\equiv 1$, the essential self-adjointness of~$\Delta|_{\vcomp}$ was proven by Jorgensen~\cite{Jor-08} (see also Jorgensen--Pearse~\cite{Jor-08-preprint}). With regard to Theorem~\ref{T:main-0} of the present paper, Torki-Hamza~\cite{Torki-10} proved the essential self-adjointness of $(\Delta+q)|_{\vcomp}$, where $\Delta$ is as in~(\ref{E:ord-lap}) with $w\equiv c_0$ and $q\geq-c_1$, where $c_0>0$ and $c_1\in\mathbb{R}$ are constants. The results of Wojciechowski~\cite{Woj-08}, Weber~\cite{Weber-10}, and Jorgensen~\cite{Jor-08} on the essential self-adjointness of $\Delta$ and the result of Torki-Hamza~\cite{Torki-10} on the essential self-adjointness of $(\Delta+q)|_{\vcomp}$ with $q\geq -c_1$, where $c_1$ is a constant, are all contained in Keller--Lenz~\cite{Keller-Lenz-09} and Keller--Lenz~\cite{Keller-Lenz-10}.

Under the assumption~(\ref{E:bounded-assumption}) above, the essential self-adjointness of  $(d\delta+\delta d)|_{\Omega_{0}(G)}$, where $\Omega_{0}(G)$ denotes finitely supported forms $\alpha\in C(V)\oplus C(E)$, was proven by Masamune~\cite{Masamune-09}. Additionally, Masamune~\cite{Masamune-09} studied $L^p$-Liouville property for non-negative subharmonic forms on $G$.

In the context of a graph of bounded degree, Torki-Hamza~\cite{Torki-10} made an important link between the essential self-adjointness of  $(\Delta+q)|_{\vcomp}$, where $\Delta$ is as in~(\ref{E:ord-lap}) with $w\equiv 1$, and completeness of the weighted metric $d_{1,a}$ in~(\ref{E:w-a-dist}) above; namely, if $d_{1,a}$ is complete and if $(\Delta+q)|_{\vcomp}$ is semi-bounded below, then $(\Delta+q)|_{\vcomp}$ is essentially self-adjoint on the space $\lw$ with $w\equiv 1$. Theorem~\ref{T:main-2} of the present paper extends this result to the operator~(\ref{E:magnetic-schro}).

For a study of essential self-adjointness of $(\Delta+q)|_{\vcomp}$ on a metrically non-complete graph, see Colin de Verdi\`ere, Torki-Hamza, and Truc~\cite{vtt-10}.  Adjacency matrix operator on a locally finite graph was studied in Gol\'enia~\cite{Golenia-10}. For a study of the problem of deficiency indices for Schr\"odinger operators on a locally finite graph, see~Gol\'enia--Schumacher~\cite{Golenia-Schu-10}.
%In the case $w\equiv 1$, Gol\'enia--Schumacher~\cite{Golenia-Schu-10} proved the essential self-adjointness of $(\Delta+q)|_{\vcomp}$ under the following assumption on $q$: there exists a compact set $K\subset V$ such that
%\[
%$\displaystyle\sum_{e\in\mathcal{O}_{x}}a^2(e)(1+m(t(e)))\leq q^2(x).
%\]

%Sy--Sunada~\cite{Sunada-92} studied spectral properties of a discrete Schr\"odinger operator on a locally finite connected graph with an automorphism group whose orbit space is a finite graph.

Kato's inequality for $\Delta_{\sigma}$ as in~(\ref{E:magnetic-lap}), with $w\equiv 1$ and $a\equiv 1$, was proven in Dodziuk--Mathai~\cite{Dodziuk-Mathai-03} and used to study asymptotic properties of the spectrum of a certain discrete magnetic Schr\"odinger operator. For a study of essential self-adjointness of the magnetic Laplacian on a metrically non-complete graph, see Colin de Verdi\`ere, Torki-Hamza, and Truc~\cite{vtt-10-preprint}.  A different model for discrete magnetic Laplacian was given by Sushch~\cite{Sushch-10}. In the model of~\cite{Sushch-10}, the essential self-adjointness  of a semi-bounded below discrete magnetic Schr\"odinger operator was proven.

Dodziuk~\cite{Dodziuk-06}, Wojciechowski~\cite{Woj-08}, Wojciechowski~\cite{Woj-09}, and Weber~\cite{Weber-10}  explored connections between stochastic completeness and the essential self-adjointness of $\Delta$. For extensions to the more general context of Dirichlet forms on discrete sets, see Keller--Lenz~\cite{Keller-Lenz-09} and Keller--Lenz~\cite{Keller-Lenz-10}. For a related study of random walks on infinite graphs, see Dodziuk~\cite{Dodziuk-84}, Dodziuk-Karp~\cite{Dodziuk-88}, Woess~\cite{Woess-00}, and references therein.

Finally, we remark that the problem of essential self-adjointness of Schr\"odinger operators on infinite graphs has a strong connection to the corresponding problem on non-compact Riemannian manifolds; see Gaffney~\cite{ga}, Oleinik~\cite{ol}, Oleinik~\cite{Oleinik94}, Braverman~\cite{br}, Shubin~\cite{sh1}, Shubin~\cite{sh01}, and~\cite{bms}.

\section{Preliminaries}
In what follows, $d\colon C(V)\to C(E_{s})$ is the standard differential
\[
du(e):=u(t(e))-u(o(e)).
\]
The deformed differential $d_{\sigma}\colon C(V)\to C(E_{s})$ is defined as
\begin{equation}\label{E:d-sigma}
(d_{\sigma}u)(e):=\overline{\sigma(e)}u(t(e))-u(o(e)),\qquad \textrm{for all }u\in C(V),
\end{equation}
where $\sigma$ is as in~(\ref{E:magnetic-lap}).

The deformed co-differential $\delta_{\sigma}\colon C(E_{s})\to C(V)$ is defined as follows:
\begin{equation}\label{E:delta-sigma}
(\delta_{\sigma}Y)(x):=\frac{1}{w(x)}\sum_{\substack{e\in E_{s}\\t(e)=x}}\sigma(e)a(e)Y(e)-\frac{1}{w(x)}\sum_{\substack{e\in E_{s}\\o(e)=x}}a(e)Y(e), \end{equation}
for all $Y\in C(E_{s})$, where $\sigma$, $w$, and $a$ are as in~(\ref{E:magnetic-lap}).

Let $\la$ denote the space of functions $F\in C(E_{s})$ such that $\|F\|<\infty$, where $\|F\|$ is the norm corresponding to the inner product
\[
(F,G):=\sum_{e\in E_{s}}a(e)F(e)\overline{G(e)}.
\]
For a general background on the theory of magnetic Laplacian on graphs, see~Mathai--Yates~\cite{Mathai-Yates} and Sunada~\cite{Sunada-94}.
\begin{lem} The following equality holds:
\begin{equation}\label{E:adjoint-delta}
(d_{\sigma}u,Y)=(u,\delta_{\sigma}Y),\qquad\textrm{for all }u\in \lw,\,\,Y\in\ecomp,
\end{equation}
where $(\cdot,\cdot)$ on the left-hand side (right-hand side) denotes the inner product in $\la$ (in $\lw$).
\end{lem}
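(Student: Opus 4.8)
The plan is to prove this as a summation-by-parts (discrete Green's) identity: I would expand each side using the relevant definitions and match the resulting sums term by term. The guiding observation is that since $Y\in\ecomp$ is finitely supported, every sum appearing below is in fact a finite sum, so no convergence question arises even though $u\in\lw$ need not have finite support. This finiteness is precisely what makes the interchange of summation order in the key step unconditionally valid.

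First I would expand the left-hand side using the definition~\eqref{E:d-sigma} of $d_{\sigma}$ together with the inner product on $\la$, obtaining
\[
(d_{\sigma}u,Y) = \sum_{e\in E_{s}} a(e)\bigl(\overline{\sigma(e)}\,u(t(e)) - u(o(e))\bigr)\overline{Y(e)},
\]
which I would split into a ``terminal'' contribution and an ``origin'' contribution. Next I would expand the right-hand side using~\eqref{E:delta-sigma} and the inner product on $\lw$. Because $(\delta_{\sigma}Y)(x)$ carries a factor $1/w(x)$ while the inner product on $\lw$ carries a factor $w(x)$, and since $a(e)$ and $w(x)$ are real and positive, these factors cancel after conjugation, leaving
\[
(u,\delta_{\sigma}Y) = \sum_{x\in V}u(x)\Bigl(\sum_{\substack{e\in E_{s}\\ t(e)=x}}\overline{\sigma(e)}\,a(e)\overline{Y(e)} - \sum_{\substack{e\in E_{s}\\ o(e)=x}}a(e)\overline{Y(e)}\Bigr).
\]

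The crux is then a reindexing step: the iterated sum $\sum_{x\in V}\sum_{e\in E_{s},\,t(e)=x}$ equals the single sum $\sum_{e\in E_{s}}$ with $x$ read off as $x=t(e)$, and likewise the origin sum corresponds to $x=o(e)$. Performing this interchange converts the two vertex sums on the right-hand side into exactly the terminal and origin edge sums produced by the left-hand side, and the desired equality follows by comparison. The only point requiring justification is this interchange of summation order, which I expect to be the main (indeed the only genuine) obstacle; it is resolved at once by the finite support of $Y$, which collapses each sum to a finite one so that the rearrangement is legitimate regardless of the fact that $u$ ranges over all of $\lw$.
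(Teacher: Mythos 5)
Your proof is correct and follows essentially the same route as the paper's: expand both sides via the definitions~(\ref{E:d-sigma}) and~(\ref{E:delta-sigma}), cancel the $w(x)$ factors, and reindex the double vertex-edge sum as a single sum over $E_{s}$, with the finite support of $Y$ justifying all rearrangements. The only (immaterial) difference is that the paper starts from $(u,\delta_{\sigma}Y)$ and transforms it into $(d_{\sigma}u,Y)$, whereas you expand both sides and match them.
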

\begin{proof} Using~(\ref{E:d-sigma}) and~(\ref{E:delta-sigma}) we have
\begin{align}
(u,\delta_{\sigma}Y) &=\sum_{x\in V}u(x)\left(\sum_{\substack{e\in E_{s}\\t(e)=x}}a(e)\overline{\sigma(e)Y(e)}-\sum_{\substack{e\in E_{s}\\o(e)=x}}a(e)\overline{Y(e)}\right)\nonumber\\
&=\sum_{e\in E_{s}}a(e)u(t(e))\overline{\sigma(e)Y(e)}-\sum_{e\in E_{s}}a(e)u(o(e))\overline{Y(e)}\nonumber\\
&=\sum_{e\in E_{s}}a(e)(\overline{\sigma(e)}u(t(e))-u(o(e)))\overline{Y(e)}=(d_{\sigma}u,Y).\nonumber
\end{align}
The convergence of the sums is justified by observing that only finitely many $x\in V$ contribute to the sum as $Y$ has finite support.
\end{proof}

Using the definitions~(\ref{E:d-sigma}) and~(\ref{E:delta-sigma}) together with the properties $a(\widehat{e})=a(e)$, $\sigma(\widehat{e})=\overline{\sigma(e)}$, and $|\sigma(e)|=1$, which hold for all $e\in E_{0}$, one can easily prove the following lemma.

\begin{lem}\label{L:laplacian} The equality $\delta_{\sigma}d_{\sigma}u=\Delta_{\sigma}u$ holds for all $u\in C(V)$.
\end{lem}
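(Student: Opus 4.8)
The plan is to verify the identity pointwise, evaluating both sides at an arbitrary vertex $x\in V$ by direct substitution. First I would set $Y:=d_{\sigma}u$ and apply the definition~\eqref{E:delta-sigma} of $\delta_{\sigma}$, obtaining $(\delta_{\sigma}d_{\sigma}u)(x)$ as a difference of two sums over $E_{s}$: one over edges $e$ with $t(e)=x$, carrying the factor $\sigma(e)a(e)(d_{\sigma}u)(e)$, and one over edges $e$ with $o(e)=x$, carrying the factor $a(e)(d_{\sigma}u)(e)$.

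Next I would insert the definition~\eqref{E:d-sigma} of $d_{\sigma}u$ into each sum, specializing $t(e)=x$ in the first and $o(e)=x$ in the second. In the first sum the diagonal contribution simplifies because $\sigma(e)\overline{\sigma(e)}=|\sigma(e)|^{2}=1$, turning the summand into $a(e)(u(x)-\sigma(e)u(o(e)))$; in the second sum the summand becomes $a(e)(u(x)-\overline{\sigma(e)}u(t(e)))$ after factoring out the sign.

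The crucial step is to reassemble these two sums over $E_{s}$ into the single sum over $\mathcal{O}_{x}$ that appears in the definition~\eqref{E:magnetic-lap} of $\Delta_{\sigma}$. Using the disjoint decomposition $E_{0}=E_{s}\cup\widehat{E_{s}}$, every oriented edge $e\in\mathcal{O}_{x}$ (that is, with $o(e)=x$) either lies in $E_{s}$, matching the second sum directly since $\sigma(\widehat{e})=\overline{\sigma(e)}$, or satisfies $\widehat{e}\in E_{s}$ with $t(\widehat{e})=x$, matching the first sum. In the latter case I would apply the involution identities $a(\widehat{e})=a(e)$ and $\sigma(\widehat{e})=\overline{\sigma(e)}$, together with $o(\widehat{e})=t(e)$, to rewrite the corresponding first-sum term as $a(e)(u(x)-\sigma(\widehat{e})u(t(e)))$, which is precisely the summand in~\eqref{E:magnetic-lap}. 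Collecting both cases yields $(\Delta_{\sigma}u)(x)$.

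The main obstacle is purely the bookkeeping in this last step: one must track carefully how the orientation conventions interact, matching each $e\in\mathcal{O}_{x}$ to its representative in $E_{s}$ and applying the correct identity among $a(\widehat{e})=a(e)$, $\sigma(\widehat{e})=\overline{\sigma(e)}$, and $|\sigma(e)|=1$ without sign or conjugation errors. No convergence issue arises, since $u\in C(V)$ is arbitrary and local finiteness makes each $\mathcal{O}_{x}$ finite, so the identity is genuinely an equality of finite sums at every vertex.
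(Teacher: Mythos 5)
Your proposal is correct and is exactly the argument the paper intends: the paper states that the lemma follows "easily" from the definitions~\eqref{E:d-sigma} and~\eqref{E:delta-sigma} together with $a(\widehat{e})=a(e)$, $\sigma(\widehat{e})=\overline{\sigma(e)}$, and $|\sigma(e)|=1$, which is precisely the pointwise substitution and edge-matching bookkeeping you carry out. Your splitting of $\mathcal{O}_{x}$ according to the disjoint decomposition $E_{0}=E_{s}\cup\widehat{E_{s}}$ and the resulting identification with the two sums in~\eqref{E:delta-sigma} is the right (and essentially the only) way to do it.
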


The following lemma follows easily from Lemma~\ref{L:laplacian} and~(\ref{E:adjoint-delta}).

\begin{lem}\label{L:symmetric} The operator $\Delta_{\sigma}|_{\vcomp}$ is symmetric in $\lw$:
\[
(\Delta_{\sigma}u,v)=(u, \Delta_{\sigma}v), \quad \textrm{for all }u,\,v\in\vcomp.
\]
\end{lem}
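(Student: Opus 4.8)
The plan is to express $\Delta_{\sigma}$ as the composition $\delta_{\sigma}d_{\sigma}$ by means of Lemma~\ref{L:laplacian}, and then transfer $\delta_{\sigma}$ across the inner product using the integration-by-parts identity~(\ref{E:adjoint-delta}). The guiding idea is that both $(\Delta_{\sigma}u,v)$ and $(u,\Delta_{\sigma}v)$ should reduce to the common ``Dirichlet-form'' quantity $(d_{\sigma}u,d_{\sigma}v)$, which is manifestly symmetric once the conjugation in the sesquilinear inner products is accounted for.

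First I would check the finiteness bookkeeping needed to apply~(\ref{E:adjoint-delta}), whose right-hand argument must lie in $\ecomp$. For $u,v\in\vcomp$ the deformed differentials $d_{\sigma}u$ and $d_{\sigma}v$ belong to $\ecomp$: by~(\ref{E:d-sigma}), $(d_{\sigma}u)(e)$ can be nonzero only if $o(e)$ or $t(e)$ lies in the finite support of $u$, and local finiteness of $G$ then forces $d_{\sigma}u$ to be finitely supported. This is precisely the hypothesis that licenses~(\ref{E:adjoint-delta}) with $Y=d_{\sigma}u$ or $Y=d_{\sigma}v$.

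Next I would carry out the two reductions. For the right-hand side, Lemma~\ref{L:laplacian} gives $\Delta_{\sigma}v=\delta_{\sigma}d_{\sigma}v$, and then~(\ref{E:adjoint-delta}) applied with $Y=d_{\sigma}v\in\ecomp$ yields
\begin{equation}
(u,\Delta_{\sigma}v)=(u,\delta_{\sigma}d_{\sigma}v)=(d_{\sigma}u,d_{\sigma}v).
\end{equation}
For the left-hand side I would argue symmetrically, using that the inner product on $\lw$ satisfies $(f,g)=\overline{(g,f)}$. Writing $(\Delta_{\sigma}u,v)=(\delta_{\sigma}d_{\sigma}u,v)=\overline{(v,\delta_{\sigma}d_{\sigma}u)}$ and invoking~(\ref{E:adjoint-delta}) with $u$ replaced by $v$ and $Y=d_{\sigma}u$ gives $(v,\delta_{\sigma}d_{\sigma}u)=(d_{\sigma}v,d_{\sigma}u)$, so that $(\Delta_{\sigma}u,v)=\overline{(d_{\sigma}v,d_{\sigma}u)}=(d_{\sigma}u,d_{\sigma}v)$. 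Comparing the two computations gives $(\Delta_{\sigma}u,v)=(u,\Delta_{\sigma}v)$, which is the desired symmetry.

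I do not expect a genuine obstacle in this argument, since it is a direct consequence of Lemma~\ref{L:laplacian} and the adjointness relation~(\ref{E:adjoint-delta}). The only points that require care are the finiteness check that justifies applying~(\ref{E:adjoint-delta}) to $Y=d_{\sigma}u$ and $Y=d_{\sigma}v$, and the correct handling of complex conjugation when the two arguments of the inner product are interchanged.
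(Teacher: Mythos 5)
Your proof is correct and follows exactly the route the paper intends: the paper states that Lemma~\ref{L:symmetric} ``follows easily from Lemma~\ref{L:laplacian} and~(\ref{E:adjoint-delta})'', which is precisely your reduction of both $(\Delta_{\sigma}u,v)$ and $(u,\Delta_{\sigma}v)$ to $(d_{\sigma}u,d_{\sigma}v)$ via $\Delta_{\sigma}=\delta_{\sigma}d_{\sigma}$ and the adjointness identity. Your explicit checks — that $d_{\sigma}u,d_{\sigma}v\in\ecomp$ by local finiteness, and the conjugation bookkeeping when swapping arguments — are exactly the details the paper leaves to the reader.
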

%\begin{proof}
%By Lemma~\ref{L:laplacian} and~(\ref{E:adjoint-delta}) we have
%\[
%(\Delta_{\sigma}u,v)=(\delta_{\sigma}d_{\sigma}u, v)=(d_{\sigma}u,d_{\sigma}v)=(u , \Delta_{\sigma}v),\quad \textrm{for all }u,\,v\in\vcomp,
%\]
%and the lemma is proven.
%\end{proof}

\begin{lem}\label{L:leibniz} For all $u\,,v\in C(V)$ the following property holds:
\begin{align}\label{E:laplacian-leibniz}
(\Delta_{\sigma}(uv))(x)&=(\Delta_{\sigma}u)(x)v(x)\nonumber\\
&+\frac{1}{w(x)}\sum_{e\in\mathcal{O}_{x}}a(e)\sigma(\widehat{e})u(t(e))(v(x)-v(t(e))),
\end{align}
where $x\in V$ and $\mathcal{O}_{x}$  is as in~(\ref{E:neighborhood-o-x}).
\end{lem}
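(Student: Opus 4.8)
The plan is to prove the Leibniz-type identity \eqref{E:laplacian-leibniz} by direct computation, starting from the definition \eqref{E:magnetic-lap} of $\delswa$ applied to the product $uv$. Writing out
\[
(\delswa(uv))(x)=\frac{1}{w(x)}\sum_{e\in\mathcal{O}_{x}}a(e)\bigl(u(x)v(x)-\sigma(\widehat{e})u(t(e))v(t(e))\bigr),
\]
I would aim to rewrite the term $u(x)v(x)-\sigma(\widehat{e})u(t(e))v(t(e))$ so that one piece reproduces $(\delswa u)(x)v(x)$ and the remainder matches the correction term on the right-hand side of \eqref{E:laplacian-leibniz}.

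The key algebraic step is the splitting
\[
u(x)v(x)-\sigma(\widehat{e})u(t(e))v(t(e))
=\bigl(u(x)-\sigma(\widehat{e})u(t(e))\bigr)v(x)
+\sigma(\widehat{e})u(t(e))\bigl(v(x)-v(t(e))\bigr),
\]
which is an exact identity valid for every oriented edge $e$ (it is just adding and subtracting $\sigma(\widehat{e})u(t(e))v(x)$). Substituting this into the sum over $e\in\mathcal{O}_{x}$ and factoring the constant $v(x)$ out of the first group, the first group becomes
\[
\frac{1}{w(x)}\sum_{e\in\mathcal{O}_{x}}a(e)\bigl(u(x)-\sigma(\widehat{e})u(t(e))\bigr)v(x)=(\delswa u)(x)\,v(x)
\]
by \eqref{E:magnetic-lap}, while the second group is exactly the stated correction term. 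Summing the two contributions yields \eqref{E:laplacian-leibniz}.

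Since $G$ is locally finite, the set $\mathcal{O}_{x}$ is finite for each $x$, so all sums involved are finite and no convergence issue arises; the identity is therefore purely formal and holds for arbitrary $u,v\in C(V)$ without any integrability hypothesis. I do not anticipate a genuine obstacle here: the only point requiring a little care is bookkeeping with the orientation and the factor $\sigma(\widehat{e})$, ensuring that the same $\sigma(\widehat{e})$ appearing in the definition \eqref{E:magnetic-lap} is carried consistently through the add-and-subtract step so that the first group collapses cleanly into $(\delswa u)(x)$ and the residual term matches the claimed expression verbatim.
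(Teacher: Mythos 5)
Your proof is correct and is essentially identical to the paper's: both expand $(\Delta_{\sigma}(uv))(x)$ from the definition~(\ref{E:magnetic-lap}) and then add and subtract $\frac{1}{w(x)}\sum_{e\in\mathcal{O}_{x}}a(e)\sigma(\widehat{e})u(t(e))v(x)$, grouping one part into $(\Delta_{\sigma}u)(x)v(x)$ and the other into the correction term. Your additional remark that local finiteness makes all sums finite is a harmless (and accurate) observation the paper leaves implicit.
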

\begin{proof}  Using the definition~(\ref{E:magnetic-lap}) we have
\begin{align}\label{E:laplacian-product}
(\Delta_{\sigma}(uv))(x)&=\frac{1}{w(x)}\sum_{e\in\mathcal{O}_{x}}a(e)(u(x)v(x))\nonumber\\
&-\frac{1}{w(x)}\sum_{e\in\mathcal{O}_{x}}a(e)\sigma(\widehat{e})u(t(e))v(t(e)).
\end{align}
Adding and subtracting
\[
\frac{1}{w(x)}\sum_{e\in\mathcal{O}_{x}}a(e)\sigma(\widehat{e})u(t(e))v(x)
\]
on the right-hand side of~(\ref{E:laplacian-product}) and grouping the terms appropriately, we get~(\ref{E:laplacian-leibniz}).
\end{proof}

In the proof of the following proposition, we will use a technique similar to Shubin~\cite[Section 5.1]{sh01}, Masamune~\cite{Masamune-09}, and Torki-Hamza~\cite{Torki-10}.

\begin{Prop}\label{P:leibniz} Assume that $u\in\lw$ and $Hu=0$. Then the following holds for all $\phi\in\vcomp$:
\begin{align}\label{E:H-leibniz-3}
&(H(u\phi), u\phi)\nonumber\\
&=\sum_{e\in E_{s}}a(e)\sigma_1(\widehat{e})[u_1(t(e))u_1(o(e))+u_2(t(e))u_2(o(e))](\phi(o(e))-\phi(t(e)))^2\nonumber\\
&+\sum_{e\in E_{s}}a(e)\sigma_2(\widehat{e})[-u_1(o(e))u_2(t(e))+\nonumber\\
&+u_1(t(e))u_2(o(e))](\phi(o(e))-\phi(t(e)))^2,
\end{align}
where $u_1:=\textrm{Re }u$, $u_2:=\textrm{Im }u$, $\sigma_1:=\textrm{Re }\sigma$, and $\sigma_2:=\textrm{Im }\sigma$.
\end{Prop}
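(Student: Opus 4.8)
The plan is to combine the Leibniz-type identity of Lemma~\ref{L:leibniz} with the hypothesis $Hu=0$ to eliminate the potential $q$, and then to symmetrize the resulting edge sum over the involution $e\mapsto\widehat{e}$ so as to produce the squared differences $(\phi(o(e))-\phi(t(e)))^2$. Throughout I take $\phi$ real-valued, as the right-hand side (which involves $(\phi(o(e))-\phi(t(e)))^2$ rather than its modulus) requires.

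First I would apply Lemma~\ref{L:leibniz} with $v=\phi$. Since $Hu=0$ gives $\Delta_{\sigma}u=-qu$, the term $(\Delta_{\sigma}u)(x)\phi(x)$ produced by the Leibniz rule equals $-q(x)u(x)\phi(x)$, which cancels exactly against the potential contribution $q(x)u(x)\phi(x)$ in $H(u\phi)(x)=\Delta_{\sigma}(u\phi)(x)+q(x)u(x)\phi(x)$. This leaves the clean expression
\[
(H(u\phi))(x)=\frac{1}{w(x)}\sum_{e\in\mathcal{O}_{x}}a(e)\sigma(\widehat{e})u(t(e))(\phi(x)-\phi(t(e))).
\]
Forming the inner product \eqref{E:inner-w}, the weights $w(x)$ cancel and the double sum over $x\in V$ and $e\in\mathcal{O}_{x}$ collapses to a single sum over $e\in E_{0}$ with $x=o(e)$, giving
\[
(H(u\phi),u\phi)=\sum_{e\in E_{0}}a(e)\sigma(\widehat{e})u(t(e))\overline{u(o(e))}(\phi(o(e))-\phi(t(e)))\phi(o(e)).
\]

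The key step is then to observe that $(H(u\phi),u\phi)$ is real: since $q$ is real-valued, $H|_{\vcomp}$ is symmetric (Lemma~\ref{L:symmetric} together with the obvious symmetry of multiplication by $q$), and $u\phi\in\vcomp$ because $\phi$ has finite support, so $(H(u\phi),u\phi)=\overline{(H(u\phi),u\phi)}$. As the $\phi$-factors are real, I may replace the summand by its real part, using $\textrm{Re}[\sigma(\widehat{e})u(t(e))\overline{u(o(e))}]$ in place of $\sigma(\widehat{e})u(t(e))\overline{u(o(e))}$. Now the coefficient $a(e)\,\textrm{Re}[\sigma(\widehat{e})u(t(e))\overline{u(o(e))}]$ is invariant under $e\mapsto\widehat{e}$ (using $a(\widehat{e})=a(e)$, $\sigma(e)=\overline{\sigma(\widehat{e})}$, and invariance of $\textrm{Re}$ under conjugation), so pairing the terms for $e$ and $\widehat{e}$ with $e\in E_{s}$ and using $E_{0}=E_{s}\cup\widehat{E_{s}}$ converts $(\phi(o(e))-\phi(t(e)))\phi(o(e))+(\phi(t(e))-\phi(o(e)))\phi(t(e))$ into $(\phi(o(e))-\phi(t(e)))^2$, yielding
\[
(H(u\phi),u\phi)=\sum_{e\in E_{s}}a(e)\,\textrm{Re}[\sigma(\widehat{e})u(t(e))\overline{u(o(e))}](\phi(o(e))-\phi(t(e)))^2.
\]
Expanding $\textrm{Re}[\sigma(\widehat{e})u(t(e))\overline{u(o(e))}]$ with $\sigma=\sigma_1+i\sigma_2$ and $u=u_1+iu_2$ then recovers exactly the two bracketed expressions in the statement.

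The main obstacle is the passage to the squared difference. A naive symmetrization of the unmodified sum fails, because the factor $\sigma(\widehat{e})u(t(e))\overline{u(o(e))}$ is not itself invariant under the involution; only its real part is. Recognizing that $(H(u\phi),u\phi)$ is real (so that one may pass to the real part \emph{before} symmetrizing) is precisely what makes the perfect square appear, and this is the crux of the argument; everything else is bookkeeping with the definitions \eqref{E:magnetic-lap}, \eqref{E:d-sigma}, and the involution symmetries $a(\widehat{e})=a(e)$ and $\sigma(\widehat{e})=\overline{\sigma(e)}$.
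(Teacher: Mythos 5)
Your proof is correct and follows essentially the same route as the paper: apply Lemma~\ref{L:leibniz} with $v=\phi$, use $Hu=0$ to remove the potential term, exploit the realness of $(H(u\phi),u\phi)$ (via symmetry of $H|_{\vcomp}$) to pass to real parts, and then pair $e$ with $\widehat{e}$ using the symmetries $a(\widehat{e})=a(e)$, $\sigma(\widehat{e})=\overline{\sigma(e)}$ to produce the squared differences. The only difference is bookkeeping: you keep the coefficient in the compact form $\mathrm{Re}\,[\sigma(\widehat{e})u(t(e))\overline{u(o(e))}]$ and expand into $u_1,u_2,\sigma_1,\sigma_2$ at the end, whereas the paper expands first and symmetrizes the two sums $J_1,J_2$ separately.
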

\begin{proof}
Using~(\ref{E:laplacian-leibniz}) with $v=\phi$, we obtain
\begin{align}\label{E:H-leibniz}
(H(u\phi))(x)&=(Hu)(x)\phi(x)\nonumber\\
&+\frac{1}{w(x)}\sum_{e\in\mathcal{O}_{x}}a(e)\sigma(\widehat{e})u(t(e))(\phi(x)-\phi(t(e))).
\end{align}
Taking the inner product $(\cdot,\cdot)$ with $u\phi$ on both sides of~(\ref{E:H-leibniz}), we obtain:
\begin{align}\label{E:H-inner}
&(H(u\phi), u\phi)=(\phi(Hu),u\phi)\nonumber\\
&+\sum_{x\in V}\sum_{e\in\mathcal{O}_{x}}a(e)\sigma(\widehat{e})u(t(e))(\phi(x)-\phi(t(e)))\overline{u(x)}\phi(x).
\end{align}
Taking the real parts on both sides of~(\ref{E:H-inner}), we get
\begin{align}\label{E:H-inner-1}
&(H(u\phi), u\phi)=\textrm{Re }(\phi(Hu),u\phi)\nonumber\\
&+\textrm{Re}\Big(\sum_{x\in V}\sum_{e\in\mathcal{O}_{x}}a(e)\sigma(\widehat{e})u(t(e))(\phi(x)-\phi(t(e)))\overline{u(x)}\phi(x)\Big).
\end{align}

Since $\sigma(\widehat{e})=\overline{\sigma(e)}$, it follows that $\sigma_1(\widehat{e})=\sigma_1(e)$ and $\sigma_2(\widehat{e})=-\sigma_2(e)$.
Substituting $u=u_1+iu_2$, $\sigma=\sigma_1+i\sigma_2$ and $Hu=0$ in~(\ref{E:H-inner-1}) leads to
\begin{equation}\label{E:split-1-2}
(H(u\phi), u\phi)=J_1+J_2,
\end{equation}
where
\begin{align*}
J_1:=&\sum_{x\in V}\sum_{e\in\mathcal{O}_{x}}a(e)\sigma_1(\widehat{e})[u_1(t(e))u_1(x)+\\
&+u_2(t(e))u_2(x)](\phi^2(x)-\phi(x)\phi(t(e))),
\end{align*}
and
\begin{align*}
J_2:=&\sum_{x\in V}\sum_{e\in\mathcal{O}_{x}}a(e)\sigma_2(\widehat{e})[-u_1(x)u_2(t(e))+\\
&+u_1(t(e))u_2(x)](\phi^2(x)-\phi(x)\phi(t(e))).
\end{align*}

In each of the sums $J_1$ and $J_2$ an edge $e=[x,y]\in E_{0}$ occurs twice: once as $[x,y]$ and once as $[y,x]$.
Since $a([x,y])=a([y,x])$, $\sigma_1([x,y])=\sigma_1([y,x])$, and $\sigma_2([x,y])=-\sigma_2([y,x])$, it follows that the expressions
\begin{align*}
&a(e)\sigma_1(\widehat{e})(u_1(t(e))u_1(x)+u_2(t(e))u_2(x))\\
&=a([x,y])\sigma_1([y,x])(u_1(y)u_1(x)+u_2(y)u_2(x))
\end{align*}
and
\begin{align*}
&a(e)\sigma_2(\widehat{e})(-u_1(x)u_2(t(e))+u_1(t(e))u_2(x))\\
&=a([x,y])\sigma_2([y,x])(-u_1(x)u_1(y)+u_1(y)u_2(x))
\end{align*}
are invariant under the involution $e\mapsto\widehat{e}$.
Hence, in the sum $J_1$, the contribution of $e=[x,y]$ and $\widehat{e}=[y,x]$ together is
\begin{equation}\label{E:J-1-combine}
a(e)\sigma_1(\widehat{e})(u_1(t(e))u_1(x)+u_2(t(e))u_2(x))(\phi(x)-\phi(t(e)))^2.
\end{equation}
In the sum $J_2$, the contribution of $e=[x,y]$ and $\widehat{e}=[y,x]$ together is
\begin{equation}\label{E:J-2-combine}
a(e)\sigma_2(\widehat{e})(-u_1(x)u_2(t(e))+u_1(t(e))u_2(x))(\phi(x)-\phi(t(e)))^2.
\end{equation}

Using~(\ref{E:J-1-combine}) and~(\ref{E:J-2-combine}), we can rewrite~(\ref{E:split-1-2}) to get~(\ref{E:H-leibniz-3}).
\end{proof}

We now give the definitions of minimal and maximal operators associated with the expression~(\ref{E:magnetic-schro}).

\subsection{Operators $\hmin$ and $\hmax$}\label{SS:operators-min-max} We define the operator $\hmin$ by the formula
\begin{equation}\label{E:h-min}
\hmin u:=Hu,\qquad \Dom(\hmin):=\vcomp.
\end{equation}

Since $q$ is real-valued, the following lemma follows easily from Lemma~\ref{L:symmetric}.

\begin{lem}\label{L:hvmin-symm} The operator $\hmin$ is symmetric in $\lw$.
\end{lem}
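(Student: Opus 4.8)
The plan is to exploit the additive structure $H = \delswa + q$ and reduce the symmetry of $\hmin$ to the symmetry of its two summands on the common domain $\vcomp$. Concretely, I would fix $u,v\in\vcomp$ and expand
\[
(\hmin u, v) = (\delswa u, v) + (qu, v),
\]
where all inner products are taken in $\lw$ as in \refe{inner-w}. Because $u$ and $v$ are finitely supported, every sum appearing here has only finitely many nonzero terms, so there are no convergence issues to address and each term may be rearranged freely.

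For the first summand I would invoke \refl{symmetric} directly, which gives $(\delswa u, v) = (u, \delswa v)$. For the second summand, the key observation is that $q$ is real-valued: writing out
\[
(qu, v) = \sum_{x\in V} w(x)\,q(x)\,u(x)\,\overline{v(x)},
\]
and using $q(x)=\overline{q(x)}$, I would rewrite this as $\sum_{x\in V} w(x)\,u(x)\,\overline{q(x)v(x)} = (u, qv)$. Adding the two identities then yields
\[
(\hmin u, v) = (u, \delswa v) + (u, qv) = (u, Hv) = (u, \hmin v),
\]
which is precisely the asserted symmetry.

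There is no substantial obstacle here: the entire argument rests on \refl{symmetric}, which already packages the nontrivial computation involving the involution $e\mapsto\widehat{e}$, together with the elementary fact that multiplication by a real-valued function is a symmetric operator on a weighted $\ell^2$ space. The only point requiring any care is that one must work on the common domain $\vcomp$, so that both $\hmin u$ and $\hmin v$ lie in $\lw$ and all sums terminate; this is exactly why \refl{symmetric} is stated for finitely supported functions and transfers without modification to $\hmin$.
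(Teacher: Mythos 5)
Your proposal is correct and follows exactly the route the paper intends: the paper's proof consists of the single remark that the lemma ``follows easily from Lemma~\ref{L:symmetric}'' since $q$ is real-valued, and your argument simply makes explicit the decomposition $(\hmin u,v)=(\Delta_{\sigma}u,v)+(qu,v)$ with the symmetry of each summand. The finite-support observation ensuring all sums terminate and $\hmin u\in\lw$ is the right point of care, and nothing further is needed.
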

%\begin{proof} Since $q$ is real-valued, by Lemma~\ref{L:symmetric} we have
%\[
%(\hmin u,v)=(\Delta_{\sigma}u+qu,v)=(u, \Delta_{\sigma}v+qv)=(u,\hmin v),
%\]
%for all $u,\, v\in\vcomp$, and the lemma is proven.
%\end{proof}
We define $\hmax:=(\hmin)^{*}$, where $T^*$ denotes the adjoint of operator $T$. We also define $\mathcal{D}:=\{u\in \lw\colon Hu\in \lw\}$.

\begin{lem} The following hold: $\Dom(\hmax)=\mathcal{D}$ and $\hmax u=Hu$ for all $u\in\mathcal{D}$.
\end{lem}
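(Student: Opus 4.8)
The plan is to reduce the lemma to a single discrete \emph{Green's formula} and then unwind the definition of the adjoint. The formula I would isolate first is
\begin{equation*}
(H\phi,u)=(\phi,Hu)\qquad\text{for all }\phi\in\vcomp\text{ and all }u\in\lw,
\end{equation*}
where every sum is finite: since $\phi$ has finite support and $G$ is locally finite, $H\phi\in\vcomp$, so the left side is a finite sum; and the right side is finite because $\phi$ has finite support (note that $Hu$ is always defined \emph{pointwise}, even when $Hu\notin\lw$). To prove it I would split $H=\delswa+q$. The $q$-part is immediate, since $q$ is real gives $(q\phi,u)=(\phi,qu)$. For the Laplacian part I would use $\delswa=\delta_{\sigma}d_{\sigma}$ (Lemma~\ref{L:laplacian}) together with the adjoint relation~\eqref{E:adjoint-delta}: applying~\eqref{E:adjoint-delta} with the finitely supported edge function $Y=d_{\sigma}\phi$ gives $(\delswa\phi,u)=(d_{\sigma}\phi,d_{\sigma}u)$, and the same relation should give $(\phi,\delswa u)=(d_{\sigma}\phi,d_{\sigma}u)$ as well, whence the two coincide.

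Granting the Green's formula, the inclusion $\mathcal{D}\subseteq\Dom(\hmax)$ and the action are routine. If $u\in\mathcal{D}$ then $u\in\lw$ and $v:=Hu\in\lw$, and the formula reads $(\hmin\phi,u)=(H\phi,u)=(\phi,v)$ for every $\phi\in\vcomp$. Since $v\in\lw$, the definition of $\hmax=(\hmin)^{*}$ yields $u\in\Dom(\hmax)$ and $\hmax u=v=Hu$.

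For the reverse inclusion $\Dom(\hmax)\subseteq\mathcal{D}$, let $u\in\Dom(\hmax)$; then $u\in\lw$ and $v:=\hmax u\in\lw$ satisfies $(\hmin\phi,u)=(\phi,v)$ for all $\phi\in\vcomp$. The Green's formula rewrites the left side as $(\phi,Hu)$, so $(\phi,Hu)=(\phi,v)$ for every $\phi\in\vcomp$. Testing against the indicator $\phi=\mathbf{1}_{y}$ of a single vertex and using $w(y)>0$ forces $(Hu)(y)=v(y)$; as $y\in V$ is arbitrary, $Hu=v$ pointwise on $V$. Hence $Hu=v\in\lw$, i.e.\ $u\in\mathcal{D}$, and $\hmax u=Hu$, completing both inclusions.

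The one genuinely delicate point, which I expect to be the main obstacle, is the second half of the Green's formula, namely $(\phi,\delswa u)=(d_{\sigma}\phi,d_{\sigma}u)$: this requires applying~\eqref{E:adjoint-delta} with the edge function $d_{\sigma}u$, which in general does \emph{not} lie in $\ecomp$, so the relation is not literally covered by its stated hypotheses. The resolution I would give is that finite support of $\phi$ already localizes $(\phi,\delta_{\sigma}d_{\sigma}u)$ to the finitely many edges meeting $\operatorname{supp}\phi$, so the purely algebraic rearrangement underlying~\eqref{E:adjoint-delta} (integration by parts over a finite edge set) remains valid and no convergence is at stake. Once this localization argument is made precise, the remainder of the proof is the bookkeeping above.
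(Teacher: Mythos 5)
Your proof is correct and follows essentially the same route as the paper: a discrete Green's formula $(H\phi,u)=(\phi,Hu)$ for $\phi\in\vcomp$ and $u\in\lw$ (which the paper simply asserts, interpreting $Hu$ pointwise), followed by unwinding the definition of $\hmax=(\hmin)^{*}$ to get the two inclusions $\mathcal{D}\subset\Dom(\hmax)$ and $\Dom(\hmax)\subset\mathcal{D}$. The only differences are cosmetic and in your favor: you justify the Green's formula explicitly via $\delswa=\delta_{\sigma}d_{\sigma}$ and a localization of~\eqref{E:adjoint-delta}, and in the reverse inclusion you test against vertex indicators to get pointwise equality $Hu=v$, which is a slightly more careful version of the paper's appeal to density of $\vcomp$ in $\lw$ (density alone is a loose justification there, since $Hu$ is not yet known to lie in $\lw$).
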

\begin{proof}
Suppose that $v\in\mathcal{D}$. Then, for all $u\in\vcomp$ we have
\[
(\hmin u,v)=(\Delta_{\sigma}u+qu,v)=(u, \Delta_{\sigma}v+qv).
\]
Since $(\Delta_{\sigma}v+qv)\in\lw$, by the definition of the adjoint we obtain $v\in\Dom((\hmin)^{*})$ and $(\hmin)^{*}v=\Delta_{\sigma}v+qv$. This shows that $\mathcal{D}\subset \Dom((\hmin)^{*})$ and $(\hmin)^*v=Hv$ for all $v\in\mathcal{D}$.

Suppose that $v\in\Dom((\hmin)^{*})$. Then, there exists $z\in\lw$ such that
\begin{equation}\label{E:h-min-adj}
(\Delta_{\sigma}u+qu,v)=(u, z),\qquad\textrm{for all }u\in\vcomp.
\end{equation}
Since $(\Delta_{\sigma}u+qu,v)=(u, \Delta_{\sigma}v+qv)$ and since $\vcomp$ is dense in $\lw$, from~(\ref{E:h-min-adj}) it follows that
$\Delta_{\sigma}v+qv=z=(\hmin)^*v$. This shows that $\Dom((\hmin)^*)\subset\mathcal{D}$. Thus,  we have shown that $\mathcal{D}= \Dom((\hmin)^{*})$ and $(\hmin)^*v=Hv$ for all $v\in\mathcal{D}$.
\end{proof}

\section{Proof of Theorem~\ref{T:main-0}}
We begin with a version of Kato's inequality for discrete magnetic Laplacian. For the original version in the setting of differential operators, see Kato~\cite{Kato72}. In the case $w\equiv 1$ and $a\equiv 1$, the following lemma was proven in Dodziuk--Mathai~\cite{Dodziuk-Mathai-03}.

\begin{lem}\label{L:kato-discrete} Let $\Delta$ and $\Delta_{\sigma}$ be as in~(\ref{E:ord-lap}) and~(\ref{E:magnetic-lap}) respectively. Then,  the following pointwise inequality holds for all $u\in C(V)$:
\begin{equation}\label{E:kato-discrete}
|u|\cdot\Delta|u|\leq \textrm{Re }(\Delta_{\sigma}u\cdot\overline{u}),
\end{equation}
where $\textrm{Re }z$ denotes the real part of a complex number $z$.
\end{lem}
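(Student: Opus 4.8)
The plan is to prove the inequality pointwise at a fixed vertex $x\in V$ by expanding both sides with the definitions~(\ref{E:ord-lap}) and~(\ref{E:magnetic-lap}) and then comparing the sums over $\mathcal{O}_{x}$ term by term. First I would write out the left-hand side as
\[
|u(x)|\,(\Delta|u|)(x)=\frac{1}{w(x)}\sum_{e\in\mathcal{O}_{x}}a(e)\bigl(|u(x)|^2-|u(x)|\,|u(t(e))|\bigr),
\]
and the right-hand side, using $\operatorname{Re}(z\overline{u(x)})$ inside the sum, as
\[
\operatorname{Re}(\Delta_{\sigma}u(x)\,\overline{u(x)})=\frac{1}{w(x)}\sum_{e\in\mathcal{O}_{x}}a(e)\bigl(|u(x)|^2-\operatorname{Re}(\sigma(\widehat{e})u(t(e))\overline{u(x)})\bigr).
\]
The common term $\tfrac{1}{w(x)}\sum a(e)|u(x)|^2$ then cancels from the comparison, so that~(\ref{E:kato-discrete}) is reduced to a single inequality attached to each oriented edge.

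Since $a(e)\ge 0$ for every $e$ and $w(x)>0$, it suffices to establish the edgewise bound
\[
\operatorname{Re}\bigl(\sigma(\widehat{e})u(t(e))\overline{u(x)}\bigr)\le |u(t(e))|\,|u(x)|
\]
for each $e\in\mathcal{O}_{x}$; summing the resulting termwise inequalities against the nonnegative coefficients $a(e)/w(x)$ yields the claim. This edgewise estimate is where the only genuine content of the lemma lies, and it follows immediately from the elementary fact $\operatorname{Re}(z)\le |z|$ together with the hypothesis $|\sigma(\widehat{e})|=1$: indeed $|\sigma(\widehat{e})u(t(e))\overline{u(x)}|=|u(t(e))|\,|u(x)|$, so the real part of this product cannot exceed its modulus.

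I do not anticipate a serious obstacle here; the argument is a direct computation, and the magnetic twist $\sigma$ is harmless precisely because it is $U(1)$-valued and therefore drops out when one passes to moduli. The one point deserving care is purely bookkeeping: to keep all sums finite and legitimate one should note that for each fixed $x$ the set $\mathcal{O}_{x}$ is finite by local finiteness, so the pointwise identities above involve only finite sums and no convergence issue arises. This recovers, for general weights $w$ and $a$, the inequality proved by Dodziuk--Mathai~\cite{Dodziuk-Mathai-03} in the special case $w\equiv 1$, $a\equiv 1$.
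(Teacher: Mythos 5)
Your proof is correct and follows essentially the same route as the paper: both expand the two sides via~(\ref{E:ord-lap}) and~(\ref{E:magnetic-lap}), cancel the common $|u(x)|^2$ terms, and reduce to the edgewise estimate $\operatorname{Re}(\sigma(\widehat{e})u(t(e))\overline{u(x)})\le |u(t(e))|\,|u(x)|$, which follows from $\operatorname{Re}(z)\le|z|$ and $|\sigma(\widehat{e})|\le 1$. The paper merely writes this more compactly as a single display showing the difference of the two sides is nonpositive.
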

\begin{proof}
Using~(\ref{E:ord-lap}), ~(\ref{E:magnetic-lap}), and the property $|\sigma(\widehat{e})|\leq 1$, we obtain
\begin{align*}
&(|u|\cdot\Delta|u|)(x)-\textrm{Re }(\Delta_{\sigma}u\cdot\overline{u})(x)\nonumber\\
&=\frac{1}{w(x)}\sum_{e\in\mathcal{O}_{x}}a(e)
\textrm{Re}(\sigma(\widehat{e})u(t(e))\overline{u(x)}-|u(x)||u(t(e))|)\leq 0,
\end{align*}
and the lemma is proven.
\end{proof}
\noindent\textbf{Continuation of the Proof of Theorem~\ref{T:main-0}.} Without loss of generality, we may assume $w(x)\equiv w_0=1$. By adding a constant to $q$, we may assume that $q(x)\geq 1$, for all $x\in V$. Let $\hmin$ and $\hmax$ be as in Section~\ref{SS:operators-min-max}.

Since $\hmin=H|_{\vcomp}$ is symmetric and since $(\hmin u,u)\geq \|u\|^2$, for all $u\in\vcomp$, the essential self-adjointness of $\hmin$ is equivalent to the following statement: $\ker(\hmax)=\{0\}$; see Reed--Simon~\cite[Theorem X.26]{rs}.  Let $u\in\Dom(\hmax)$ satisfy $\hmax u=0$:
\begin{equation}\label{E:zero-vector}
(\Delta_{\sigma}+q)u=0.
\end{equation}
By~(\ref{E:kato-discrete}) and~(\ref{E:zero-vector}) we get the pointwise inequality
\begin{equation}\label{E:kato-appl}
|u|\cdot\Delta|u|\leq \textrm{Re }(\Delta_{\sigma}u\cdot\overline{u})=\textrm{Re }(-qu\cdot\overline{u})=-q|u|^2\leq -|u|^2.
\end{equation}
Rewriting~(\ref{E:kato-appl}) we obtain the pointwise inequality
\begin{equation}\nonumber
|u|(\Delta|u|+|u|)\leq 0,
\end{equation}
which leads to
\begin{equation}\label{E:new-working}
0\geq (\Delta|u|)(x)+|u(x)|=\sum_{e\in\mathcal{O}_{x}}a(e)(|u(x)|-|u(t(e))|)+|u(x)|,
\end{equation}
for all $x\in V$.

From here on, the argument is the same as in Torki-Hamza~\cite[Theorem 3.1]{Torki-10}. Assume that there exists $x_0\in V$ such that $|u(x_0)|>0$. Then, by~(\ref{E:new-working}) with $x=x_0$, there exists
$x_1\in V$ such that $|u(x_0)|<|u(x_1)|$. Using~(\ref{E:new-working}) with $x=x_1$, we see that there exists $x_2\in V$ such that $|u(x_2)|>|u(x_1)|$. Continuing like this, we get a strictly increasing sequence of positive real numbers $|u(x_n)|$. But this contradicts the fact that $|u|\in \lw$. Hence, $|u|\leq 0$ for all $x\in V$. In other words, $u=0$. $\hfill\square$

\section{Proof of Theorem~\ref{T:main-1}}
In what follows, we will use a sequence  of cut-off functions.
\subsection{Cut-off functions}\label{SS:cut-off}
Fix a vertex $x_0\in V$, and  define
\begin{equation}\label{E:cut-off}
\phi_n(x):=\left(\left(\frac{2n-r(x)}{n}\right)\vee 0\right)\wedge 1,\qquad x\in V,\quad n\in \mathbb{Z}_{+},
\end{equation}
where  and $r(x)=d(x_0,x)$ is as in Section~\ref{SS:setting}.

As shown in Masamune~\cite[Proposition 3.2]{Masamune-09}, the sequence $\{\phi_n\}_{n\in\mathbb{Z}_{+}}$ satisfies the following properties:

\medskip

\noindent (i) $0\leq \phi_n(x)\leq 1$, for all $x\in V$;

\medskip

\noindent (ii)  $\phi_n(x)=1$ for $x\in B_{n}(x_0)$, and  $\phi_n(x)=0$ for $x\notin B_{2n}(x_0)$;

\medskip

\noindent (iii) $\displaystyle\sup_{e\in E_{s}} |(d\phi_n)(e)|\leq \frac{1}{n}$.

\bigskip

\noindent\textbf{Continuation of the Proof of Theorem~\ref{T:main-1}.} We will use a technique similar to Shubin~\cite[Section 5.1]{sh01}, Masamune~\cite{Masamune-09}, and Torki-Hamza~\cite{Torki-10}.

Since $H$ satisfies~(\ref{E:assumption-semibounded}), without loss of generality, we may add $(C+1)I$ to $H$ and assume that
\begin{equation}\label{E:assumption-semibounded-1}
    (Hv,v)\geq \|v\|^2,\qquad\textrm{ for all }v\in\vcomp.
\end{equation}
Since $\hmin=H|_{\vcomp}$ is symmetric and satisfies~(\ref{E:assumption-semibounded-1}), the essential self-adjointness of $\hmin$ is equivalent to the following statement: $\ker(\hmax)=\{0\}$; see Reed--Simon~\cite[Theorem X.26]{rs}.

Let $u\in\Dom(\hmax)$ satisfy $\hmax u=0$. Let $\phi_n$ be as in Section~\ref{SS:cut-off}.
Starting from~(\ref{E:H-leibniz-3}) with $\phi=\phi_n$ and using the properties (ii) and (iii) of $\phi_n$, together with  $|\sigma_1|\leq 1$ and $|\sigma_2|\leq 1$, we get the following estimate:
\begin{align}\label{E:H-leibniz-4}
&(H(u\phi_n), u\phi_n)\nonumber\\
&\leq\frac{1}{n^2}\sum_{e\in B_{2n}(x_0)}a(e)(u_1^2(t(e))+u_1^2(o(e))+u_2^2(t(e))+u_2^2(o(e))),
\end{align}
where $B_{2n}(x_0)$ is as in property (ii) of $\phi_n$.

By~(\ref{E:bounded-assumption-2}) and~(\ref{E:H-leibniz-4}) we obtain
\begin{align}\label{E:H-leibniz-5}
(H(u\phi_n), u\phi_n)&\leq\frac{m_{2n}a_{2n}}{n^2}\sum_{x\in B_{2n}(x_0)}w(x)((u_1(x))^2+(u_2(x))^2)\nonumber\\
&\leq\frac{m_{2n}a_{2n}}{n^2}\|u\|^2.
\end{align}
Since $\phi_nu\in\vcomp$, the inequality~(\ref{E:assumption-semibounded-1}) is satisfied with $v=\phi_n u$. Combining~(\ref{E:H-leibniz-5}) and~(\ref{E:assumption-semibounded-1}) we get
\begin{equation}\label{E:final-1}
    \|u\phi_n\|^2\leq \frac{m_{2n}a_{2n}}{n^2}\|u\|^2.
\end{equation}
We now take the limit as $n\to \infty$ in~(\ref{E:final-1}). Using the assumption~(\ref{E:bounded-assumption}) and the definition of $\phi_n$, we obtain $\|u\|^2\leq 0$. This shows that $u=0$. $\hfill\square$

\section{Proof of Theorem~\ref{T:main-2}}
In the case $w\equiv 1$, the following family of cut-off functions was constructed in Torki-Hamza~\cite{Torki-10}.

\subsection{Family of cut-off functions}\label{SS:family-cutoff}
Fix $x_0\in V$. For $R>0$ define
\begin{equation}\label{E:nbd-u}
U_{R}:=\{x\in V\colon d_{w,a}(x_0,x)\leq R\},
\end{equation}
where $d_{w,a}$ is as in~(\ref{E:w-a-dist}).
Define
\begin{equation}\label{E}
    \psi_{R}:=\min\{1,d_{w,a}(x,\,V\setminus U_{R+1})\}.
\end{equation}
The family $\psi_{R}$ satisfies the following properties:

\medskip

\noindent (i) $\psi_{R}(x)\equiv 1$, for all $x\in U_{R}$; (ii) $\psi_{R}(x)\equiv 0$, for all $x\in V\setminus U_{R+1}$; (iii) $0\leq \psi_{R}\leq 1$, for all $x\in V$;

\medskip

\noindent (iv) $\psi_{R}$ has finite support;

\medskip

\noindent (v) $\psi_{R}$ is a Lipschitz function with Lipschitz constant $1$.

\medskip

It is easy to see that the properties (i), (ii), (iii) and (v) hold. To prove property (iv), we will show that $U_{R+1}$ is finite. Clearly, $U_{R+1}$ is a closed and bounded set. With $d_{w,a}$ defined as in~(\ref{E:w-a-dist}), it follows that  $(V,d_{w,a})$ is a length space in the sense of Burago--Burago--Ivanov~\cite[Section 2.1]{burago-book}. Additionally, we know by hypothesis that $(V,d_{w,a})$ is complete. Thus,  by~\cite[Theorem 2.5.28]{burago-book} the set $U_{R+1}$ is compact. Suppose that there exists a sequence of vertices $\{x_n\}_{n\in\mathbb{Z}_{+}}\subset U_{R+1}$. Since $U_{R+1}$ is compact, there exists a subsequence, which we again denote by $\{x_n\}_{n\in\mathbb{Z}_{+}}$, such that $x_n\to x$ and $x\in U_{R+1}$.  Let $F=\{y_{1},\,y_{2},\,\dots,\,y_{s}\}$ be the set of all vertices $y\in V$ such that there is an edge connecting $y$ and $x$. The set $F$ is finite since $G$ is locally finite. Let $k_0=\max\{n\colon x_n\in F\}$ (if there is no $x_n$ such that  $x_n\in F$, we take $k_0=0$). Take $\epsilon>0$ such that $\displaystyle\epsilon<\min_{1\leq j\leq s}(d_{w,a}(y_{j},x))$. Then there exists $n_0\in \mathbb{Z}_{+}$ such that $d_{w,a}(x_n,x)<\epsilon$ for all $n\geq n_0$. Take $K\in\mathbb{Z}_{+}$ such that $K>\max\{k_{0}, n_0\}$. Clearly, $d_{w,a}(x_{K},x)<\epsilon$. Since $(V, d_{w,a})$ is a complete locally compact length space, by~\cite[Theorem 2.5.23]{burago-book} there is a shortest path $\gamma$ connecting $x_{K}$ and $x$. This means that the length $L_{w,a}(\gamma)$ of the path $\gamma$ satisfies
\begin{equation}\label{E:path-length-gamma}
L_{w,a}(\gamma)=d_{w,a}(x_{K},x)<\epsilon.
\end{equation}
Since $x_{K}\notin F$, there is no edge connecting $x_{K}$ and $x$. Hence, the path $\gamma$ will contain a vertex $y_{j}\in F$. Thus, $L_{w,a}(\gamma)>d_{w,a}(y_{j},x)>\epsilon$, and this contradicts~(\ref{E:path-length-gamma}). Hence, the set $U_{R+1}$ is finite.

\medskip

\noindent\textbf{Continuation of the Proof of Theorem~\ref{T:main-2}.} We adapt the technique of Torki-Hamza~\cite{Torki-10} to our setting.

As in the proof of Theorem~\ref{T:main-1}, without the loss of generality, we will assume~(\ref{E:assumption-semibounded-1}) and show that $\ker(\hmax)=\{0\}$. Let $u\in\Dom(\hmax)$ satisfy $\hmax u=0$. Using~(\ref{E:H-leibniz-3}) with $\phi=\psi_{R}$, we get
\begin{align}\label{E:H-leibniz-4-psi}
&(H(u\psi_{R}), u\psi_{R})\nonumber\\
&=\frac{1}{2}\sum_{x\in V}\sum_{e\in\mathcal{O}_{x}}a(e)\sigma_1(\widehat{e})[u_1(t(e))u_1(o(e))+\nonumber\\
&+u_2(t(e))u_2(o(e))](\psi_{R}(o(e))-\psi_{R}(t(e)))^2\nonumber\\
&+\frac{1}{2}\sum_{x\in V}\sum_{e\in\mathcal{O}_{x}}a(e)\sigma_2(\widehat{e})[-u_1(o(e))u_2(t(e))+\nonumber\\
&+u_1(t(e))u_2(o(e))](\psi_{R}(o(e))-\psi_{R}(t(e)))^2,
\end{align}
where $\mathcal{O}_{x}$ is as in~(\ref{E:neighborhood-o-x}).
Using the inequality $2\alpha\beta\leq \alpha^2+\beta^2$, properties $|\sigma_1|\leq 1$ and $|\sigma_2|\leq 1$, and the invariance of $a(e)$ and
\[
(u^2_{j}(t(e))+u^2_{j}(o(e)))(\psi_{R}(o(e))-\psi_{R}(t(e)))^2,\qquad j=1,2
\]
under involution $e\mapsto\widehat{e}$, we get
\begin{align}\label{E:H-leibniz-5-psi}
&(H(u\psi_{R}), u\psi_{R})\nonumber\\
&\leq\frac{1}{2}\sum_{x\in V}\sum_{e\in\mathcal{O}_{x}}a(e)(u^2_1(o(e))+u^2_2(o(e)))
(\psi_{R}(o(e))-\psi_{R}(t(e)))^2.
\end{align}

Using properties (i), (ii) and (v) of $\psi_{R}$,~(\ref{E:H-leibniz-5-psi}) leads to
\begin{eqnarray}\label{E:H-leibniz-6-psi}
&&(H(u\psi_{R}), u\psi_{R})\nonumber\\
&&\leq\frac{1}{2}\sum_{x\in U_{R+1}\setminus U_{R}}\,\sum_{e\in\mathcal{O}_{x}}a(e)|u(o(e))|^2
(d_{w,a}(o(e),t(e)))^2.
\end{eqnarray}
By~(\ref{E:w-a-dist}) and~(\ref{E:length-a-dist}) it follows that
\begin{equation}\label{E:H-leibniz-8-psi}
d_{w,a}(o(e),t(e))\leq \sqrt{\frac{w(o(e))}{a(e)}}.
\end{equation}
Using~(\ref{E:H-leibniz-6-psi}),~(\ref{E:H-leibniz-8-psi}), and bounded degree assumption on $G$, we get
\begin{equation}\label{E:H-leibniz-7-psi}
(H(u\psi_{R}), u\psi_{R})\leq\frac{N}{2}\sum_{x\in U_{R+1}\setminus U_{R}}\,w(x)|u(x)|^2.
\end{equation}
By property (iv) of $\psi_{R}$, it follows that $\psi_Ru\in\vcomp$; hence, the inequality~(\ref{E:assumption-semibounded-1}) is satisfied with $v=\psi_R u$. Combining~(\ref{E:H-leibniz-7-psi}) and~(\ref{E:assumption-semibounded-1}) we get
\begin{equation}\label{E:final-4}
    \|u\psi_{R}\|^2 \leq\frac{N}{2}\sum_{x\in U_{R+1}\setminus U_{R}}\,w(x)|u(x)|^2.
\end{equation}
We now take the limit as $R\to \infty$ in~(\ref{E:final-4}). Using the definition of $\psi_R$ and the assumption $u\in \lw$, we obtain $\|u\|^2\leq 0$. This shows that $u=0$. $\hfill\square$

\end{document}